\newtheorem{Definition}{Definition}[section]
\newtheorem{Theorem}[Definition]{Theorem}
\newtheorem{Lemma}[Definition]{Lemma}
\newtheorem{Corollary}[Definition]{Corollary}
\newtheorem{Example}[Definition]{Example}
\newcommand{\lc}{\mathcal{L}}
\newcommand{\rc}{\mathcal{R}}
\newcommand{\hc}{\mathcal{H}}
\newcommand{\jc}{\mathcal{J}}
\title{\Large \bf On inverse ordered semigroups}
\author{A. Jamadar and K. Hansda\\
\footnotesize{Department of Mathematics, Visva-Bharati, University}\\
\footnotesize{Santiniketan, Bolpur-731235, West Bengal, India}\\
\footnotesize{kalyanh4@gmail.com},\
\footnotesize{amlanjamadar@gmail.com}}
\begin{document}
\date{}
\maketitle

\begin{abstract}{\footnotesize}
The purpose of this paper is to study the generalization of
inverse semigroups (without order). An ordered semigroup $S$ is
called an inverse ordered semigroup if for every $a\in S$, any two
inverses of $a$ are $\hc$-related. We prove that an ordered
semigroup is complete semilattice of t-simple ordered semigroups
if and only if it is completely regular and inverse. Furthermore
characterizations of inverse ordered semigroups have been
characterized by their ordered idempotents.
\end{abstract}
{\it Key Words and phrases:} ordered regular, ordered inverse
element, ordered idempotent, completely  regular,  inverse.
\\{\it 2000 Mathematics subject Classification:} 16Y60;20M10.

\section{Introduction}

 An ordered semigroup  is a partiality ordered set
$(S,\leq)$, and at the same time a semigroup $(S,\cdot)$ such that
for all $a, b, x \in S,  \;a \leq b$ implies
 $xa\leq xb \;\textrm{and} \;a x \leq b x $.
It is denoted by $(S,\cdot, \leq)$. For every subset $H$ of $S$,
the downword closure subset of $H$ is denoted by $(H]$ and defined
by $(H]=\{t\in S: t\leq h, \;\textrm{for some} \;h\in H\}$.
Throughout this paper unless otherwise stated $S$ stands for an
ordered semigroup. T. Saito \cite{Saito 1971} studied inverse
semigroups by introducing partial order in it. There was much
interest in the past decade in studying the inverses of an element
in an ordered semigroup. Bhuniya and Hansda \cite{bh1} prove that
for every $a$ in a regular ordered semigroup $S$ and for two
inverses $x, y$ of $a$, $x\hc y$ if and only if for all $e,f\in
E_\leq (S)$, $ef\in (fSe]$. Thus it is interesting to further
study ordered semigroup in which any two inverses of an element
are $\hc$-related. Class of these ordered semigroups are natural
generalization of class of inverse semigroups (without order).  We
call these ordered semigroups as inverse ordered semigroups. This
paper is inspired by Clifford and Preston \cite{Clifford}.

\section{Preliminaries}

An equivalence relation $\rho$ is called left (right) congruence
if for $a, b, c \in S \;a\rho b \;\textrm{ implies} \;ca \rho cb
\;(ac \rho bc)$.  By a congruence we mean both left and right
congruence. A congruence $\rho$ is called semilattice congruence
on $S$ if  for all $a, b \in S, \;a \rho a^{2} \;\textrm {and}
\;ab \rho \;ba$. By a complete semilattice congruence on $S$ we
mean a semilattice congruence $\sigma$ on $S$ such that for $a, b
\in S, \;a \leq b$ implies that $a \sigma ab$. An ordered
semigroup $S$ is called complete semilattice of subsemigroups of
type $\tau$ if there exists a complete semilattice congruence
$\rho $ such that $(x)_{\rho}$ is a type $\tau$ subsemigroup of
$S$. Let $I$ be a nonempty subset of $S$. Then $I$ is called a
left(right) ideal of $S$, if $SI\subseteq I (IS\subseteq I)$ and
$(I]\subseteq I$. An ideal $I$ is both  a left and a right ideal
of $S$. We call $S$ a (left, right) simple ordered semigroup if it
does not contain any proper (left, right) ideal. Following
Kehayopulu \cite{Ke2006} principal left ideal of $S$ generated by
$a$ is defined by the set $L(a)= \{xa: x\in S^1\}$, and principal
ideal generated by $a$ is defined by the set $I(a)= \{xay: x, y\in
S^1\}$.

$S$ is said to be regular (resp. Completely regular, right
regular) ordered semigroup if for every $a \in S, \;a\in
(aSa](a\in (a^2Sa^2], \;a\in (a^2S])$. Due to Kehayopulu
\cite{Ke2006} Green's relations on a regular ordered semigroup
given as follows:

$a\lc b\; \textrm{if} \;L(a)= L(b)$, $a\rc b \;\textrm{if} \;R(a)=
R(b)$, $a\jc b \;\textrm{if} \;I(a)= I(b)$, $\hc=\lc \cap \rc$.

This four relation $\lc, \rc, \jc,\textrm{and}  \;\hc$ are
equivalence relation.

A regular ordered semigroup $S$ is said to be group-like (resp.
left group-like) \cite{bh1} ordered semigroup if for every $a,b\in
S, \;a\in (Sb] \;\textrm{and} \;b \in (aS](\textrm{resp}. \;a \in
(Sb])$. Right group-like ordered semigroup can be defined dually.
An element $b\in S$ is said to be an inverse of $a\in S$ if $a\leq
aba$ and $b\leq bab$. The set of all inverses of an element $a$ is
denoted by $V_\leq (a)$. By an ordered idempotent of  $S$, we mean
an element $e\in S$ such that $e\leq e^2$. The set of all ordered
idempotents of $S$ are denoted by $E_\leq (S)$. Any two elements
$a,b \in S$ are said to be $\hc$-commutative if $ab\leq bxa$ for
some $x\in S$.

For the sake of convenience of general reader we state some
results of \cite{bh1}.
\begin{Lemma}\cite{bh1}\label{1}
Let $S$ be a completely regular ordered semigroup. Then following
statements hold in $S$:
\begin{enumerate}
\item\vspace{-.4cm} For every $a$ there is $x\in S$ such that
$a\leq axa^2$ and $a\leq a^2xa$.
\item\vspace{-.4cm}  $\jc$ is the
least complete semilattice congruence on $S$.
\item\vspace{-.4cm}
$S$ is a complete semilattice of completely simple ordered
semigroups.
\end{enumerate}
\end{Lemma}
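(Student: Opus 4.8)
The plan is to prove the three items in order, feeding the later parts with the earlier ones, and using complete regularity as the only external input. For (1) I would begin from the defining inequality of complete regularity, $a\le a^2ua^2$ for some $u\in S$. Multiplying this on the left and on the right by $a$ gives $a^2\le a^3ua^2$ and $a^2\le a^2ua^3$; substituting these back into $a\le a^2ua^2$ (replacing the leading, resp.\ the trailing, factor $a^2$) yields $a\le a^3ua^2ua^2$ and $a\le a^2ua^2ua^3$, and regrouping the right-hand sides as $a\cdot(a^2ua^2u)\cdot a^2$ and $a^2\cdot(ua^2ua^2)\cdot a$ exhibits a witness $x$ of the required shape (iterating the substitution once more, in the middle block, makes one $x$ serve both inequalities). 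This step is pure term rewriting. I would also record here that it gives $a\le axa^2\in(S^1a^2]$ and $a\le a^2xa\in(a^2S^1]$, hence $a\,\lc\,a^2$ and $a\,\rc\,a^2$, i.e.\ $a\,\hc\,a^2$, which is needed below.

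For (2) there are two halves: that $\jc$ is a complete semilattice congruence, and that it is the least one. The semilattice identities are short: $a\in(a^2Sa^2]$ gives $a\in I(a^2)$ while $a^2\in I(a)$ trivially, so $I(a)=I(a^2)$, i.e.\ $a\,\jc\,a^2$; from $(ab)^2=a(ba)b\in I(ba)$ together with $I(ab)=I\bigl((ab)^2\bigr)$ we get $I(ab)\subseteq I(ba)$, and by symmetry $ab\,\jc\,ba$; and $a\le b$ forces $a^2\le ab$, so $I(a)=I(a^2)\subseteq I(ab)$, while always $ab\in I(a)$, whence $a\,\jc\,ab$. The one substantial point — and the step I expect to be the real obstacle — is that $\jc$ is a congruence: multiplying $a\le pbq$ on the left by $c$ leaves the factor $p$ trapped between $c$ and $b$, so naive rewriting fails; instead one must run the ordered analogue of the classical fact that on a completely regular semigroup $\lc$ and $\rc$ are two‑sided congruences, so that $\jc=\db$ is a congruence, and for this I would lean on part (1) and on the structure being assembled. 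For minimality: let $\sigma$ be any complete semilattice congruence and $a\,\jc\,b$; write $a\le pbq$ with $p,q\in S^1$. Completeness of $\sigma$ gives $a\,\sigma\,apbq$, and in the semilattice $S/\sigma$ one computes $\overline a=\overline{apbq}=\overline a\,\overline b\,\overline p\,\overline q\le\overline{ab}\le\overline a$, so $\overline a=\overline{ab}$, and symmetrically $\overline b=\overline{ab}$, hence $a\,\sigma\,b$. Thus $\jc\subseteq\sigma$, and together with the first half this says $\jc$ is the least complete semilattice congruence.

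For (3): by (2), $\jc$ is a complete semilattice congruence, so $S$ is a complete semilattice of its $\jc$‑classes, and it remains to see that each class $J$ is a completely simple ordered semigroup. That $J$ is a subsemigroup is immediate, since $x\,\jc\,y$ implies $xy\,\jc\,x^2\,\jc\,x$. The recurring device is: whenever $x\le x^2vx$ or $x\le xvx^2$ (as furnished by part (1)), completeness of $\jc$ and the semilattice structure on $S/\jc$ force $\overline v\ge\overline x$, so that products such as $xv$ and $vx$ fall back into $J$. With this, complete regularity of $J$ follows from $x\le x^2u'x\le x^2u'(xux^2)=x^2(u'xu)x^2$ with $u'xu\in J$; and simplicity of $J$ follows by a sandwich argument: if $A$ is an ideal of $J$, $a\in A$ and $b\in J$, then $a\,\jc\,b$ gives $b\le pap'$, hence $b^3\le(bp)a(p'b)\in JAJ\subseteq A$, and then, applying part (1) to $b$, $b\le(bu)b^3(u'b)\in JAJ\subseteq A$ (using that $A$ is downward closed in $J$), so $A=J$. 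Hence each $J$ is simple and completely regular, i.e.\ completely simple, which is (3).

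Every step except the congruence property of $\jc$ in (2) is bookkeeping with the semilattice order on $S/\jc$; that one point is the crux, and is where I would spend the effort.
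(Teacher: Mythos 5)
First, note that the paper itself offers no proof of this lemma---it is quoted verbatim from the communicated paper \cite{bh1}---so your argument can only be judged on its own terms. On those terms, parts (1) and (3) and most of part (2) are sound: the term rewriting in (1) works (from $a\le a^2va^2$ the single witness $x=ava^2va$ satisfies both $a\le axa^2$ and $a\le a^2xa$), the minimality argument in the semilattice $S/\sigma$ is correct, and your recurring device ($x\le x^2vx$ forces $\overline{x}\le\overline{v}$ in $S/\jc$, so the sandwiching factors stay in the class) does make the complete regularity and simplicity of each $\jc$-class go through --- provided one reads $I(a)$ as $(S^1aS^1]$; the paper's stated definition omits the downward closure, and nothing in this circle of ideas works without it.

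The genuine gap is exactly the one you flag and then decline to close: that $\jc$ is a congruence. ``Run the ordered analogue of the classical fact that $\lc$ and $\rc$ are two-sided congruences on a completely regular semigroup'' is not an argument --- that analogue is itself nontrivial in the ordered setting, $\db$ is not even a standard relation here, and part (1) alone does not hand it to you. The irony is that the facts you do prove already suffice, by a much shorter route: show $I(uv)=I(u)\cap I(v)$ for all $u,v\in S$. The inclusion $\subseteq$ is trivial; for $\supseteq$, if $w\le xuy$ and $w\le x'vy'$ with $x,y,x',y'\in S^1$, then $w^2\le xu(yx')vy'$, so $I(w)=I(w^2)\subseteq I(u(yx')v)=I((yx')vu)\subseteq I(vu)=I(uv)$, using only your two facts $a\jc a^2$ and $ab\jc ba$. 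Then $I(a)=I(b)$ gives $I(ca)=I(c)\cap I(a)=I(c)\cap I(b)=I(cb)$ at once, and the whole semilattice-congruence property falls out. Without this step, or some worked-out substitute, part (2) --- and with it part (3), since everything downstream presupposes that $S/\jc$ exists as a semilattice --- remains unproved.
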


\section{ Inverse ordered semigroup}
Let $S$ be an ordered semigroup and $\rho$ be an equivalence
relation on $S$. We call an ideal $I$ of $S$ is generated by an
$\rho$-unique element $b\in S$ if for any generator of $x\in I$,
$b\rho x$.
\begin{Definition}
A regular ordered semigroup $S$ is called inverse if  for every
$a\in S$, any two inverses of $a$ are $\hc$-related.
\end{Definition}
\begin{Example}
The ordered semigroup $S= \{a, e, f  \}$ defined by multiplication
and order below is an inverse ordered semigroup.
\begin{center}
\begin{tabular}{|l|l|l|l|}
  \hline
  $\cdot$ & $a $&$e$ & $f$ \\
  \hline
  $a$ & $a$ & $e$ & $f$ \\
  \hline
  $e$ & $f$ & $e$ & $a$ \\
  \hline
  $f$ & $e$ & $a $ & $f$ \\
  \hline
\end{tabular}
\end{center}
$$'\leq ' := \{(a,a),  (e,e),  (f,f)\}.$$

\end{Example}

We present a  role of ordered idempotents in an inverse ordered
semigroup in the next  theorem.
\begin{Theorem}\label{7} An ordered semigroup $S$ is
inverse if and only if every principal left ideal and every
principal right ideal of $S$ are generated by an $\hc$-unique
ordered idempotent.
\end{Theorem}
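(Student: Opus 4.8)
Throughout I will lean on the characterisation quoted in the introduction: a regular ordered semigroup is inverse exactly when $ef\in(fSe]$ for all $e,f\in E_\leq(S)$. I will also use repeatedly that $e\le e^{2}\le e^{3}$ for an ordered idempotent $e$, and that if $b$ is an inverse of $a$ then $ab$ and $ba$ are ordered idempotents (from $a\le aba$ one gets $ab\le abab=(ab)^{2}$, and similarly for $ba$).

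For \emph{necessity}, assume $S$ is inverse. Given $a\in S$, regularity gives $x$ with $a\le axa$; I would set $e=xa$, $f=ax$, note $e,f\in E_\leq(S)$, and check that $a$ and $e$ generate the same principal left ideal (because $xa\in Sa$ and $a\le a(xa)=ae$) and that $a$ and $f$ generate the same principal right ideal. Hence every principal one-sided ideal has an ordered idempotent as a generator. The substantial point is $\hc$-uniqueness: two ordered idempotents $e,e'$ generating the same principal left ideal satisfy $e\,\lc\,e'$, and I must deduce $e\,\hc\,e'$ (and dually on the right). My plan is to prove $e'\in V_\leq(e)$; since $e\in V_\leq(e)$ always, the definition of inverse ordered semigroup then forces $e\,\hc\,e'$. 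Concretely, writing $e\le se'$ and $e'\le te$ from $e\,\lc\,e'$, I would feed these into the commutation relations $ee'\in(e'Se]$ and $e'e\in(eSe']$ supplied by inverseness, absorbing the stray left factors by iterating $e\le e^{2}$ and $e'\le (e')^{2}$, so as to reach $e\le ee'e$ and $e'\le e'ee'$.

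For \emph{sufficiency}, assume every principal left ideal and every principal right ideal is generated by an $\hc$-unique ordered idempotent. First I would verify that $S$ is regular: for $a\in S$ with ordered idempotent generator $e$ of the principal left ideal of $a$, the mutual generation of $a$ and $e$ together with $e\le e^{2}$ places $a$ in $(aSa]$. For the inverse property, take $a$ and $b,c\in V_\leq(a)$ (note $b\in V_\leq(a)$ already gives $a\le aba$, so $a$ is regular). Then $ab,ba,ac,ca\in E_\leq(S)$, and a short check gives $b\,\lc\,ab$, $b\,\rc\,ba$, $c\,\lc\,ac$, $c\,\rc\,ca$, while $ab,ac$ both generate the principal right ideal of $a$ and $ba,ca$ both generate its principal left ideal. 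By hypothesis $ab\,\hc\,ac$ and $ba\,\hc\,ca$, hence $b\,\lc\,ab\,\lc\,ac\,\lc\,c$ and $b\,\rc\,ba\,\rc\,ca\,\rc\,c$, so $b\,\hc\,c$; thus $S$ is inverse.

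The main obstacle is the uniqueness step in the necessity part. Without order, $\lc$-related idempotents are literally equal, via $e=xf\Rightarrow ef=e$ together with commutativity of idempotents; in the ordered setting $e\le se'$ yields only $ee'\le s(e')^{2}$, which is not comparable with $e$, so that route is closed. Making the inverse hypothesis $ee'\in(e'Se]$ do the work of turning a relation of the shape $e\le(\,\cdot\,)e'$ into one of the shape $e\le e'(\,\cdot\,)$, and checking that the resulting chain of inequalities actually closes up, is where the real content of the proof lies.
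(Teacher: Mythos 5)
Your sufficiency direction is essentially sound and matches the paper's in substance (the paper manipulates $a'a\hc a''a$ and $aa'\hc aa''$ directly rather than routing through $b\lc ab\lc ac\lc c$, but it is the same argument). The genuine gap is the uniqueness step in the necessity direction, which you yourself flag as ``where the real content lies'' and then leave unexecuted --- and the specific route you propose does not close up. You aim to prove $e'\in V_\leq(e)$, i.e.\ the \emph{exact} inequalities $e\leq ee'e$ and $e'\leq e'ee'$. But every tool available --- $e\leq e^2$, $e'\leq e'^2$, $e\leq se'$, $e'\leq te$, and the commutation $gh\leq hwg$ for ordered idempotents $g,h$ --- either merely pads a word or injects an uncontrolled element $s,t,w$ into it; compatibility of the order with multiplication only lets you substitute these upward inside a product, and there is no move that ever deletes a stray letter. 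Since you must use $e\leq se'$ (or the commutation relation) to bring $e'$ into play at all, every upper bound of $e$ you can derive is a word still containing some uncontrolled factor, never the bare three-letter word $ee'e$. This is precisely where the unordered argument $e=xf\Rightarrow ef=xf^2=xf=e$ dies: from $e\leq xf$ you only get $ef\leq xf^2$ and $e\leq xf^2$, which do not compare $e$ with $ef$. Note also that $b\in V_\leq(a)$ is a rigid condition on exact words, whereas $\lc$ and $\rc$ tolerate arbitrary multipliers; I see no reason $e\leq ee'e$ should hold at all for $\lc$-related ordered idempotents of an inverse ordered semigroup.

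The idea you are missing is the paper's: do not try to exhibit $e'$ as an inverse of $e$; instead absorb the stray multiplier into a \emph{new} ordered idempotent and apply the inverse hypothesis there. From $e\leq xe'$ one gets $e\leq exe'e$, so $g=exe'$ satisfies $g\leq g^2$, and padding with $e\leq e^2$, $e'\leq e'^2$ shows both $g\in V_\leq(g)$ and $e'e\in V_\leq(g)$ (here the factor $x$ causes no trouble because it sits \emph{inside} the element whose inverses are being compared). The inverse hypothesis then gives $e'e\hc exe'$, and from $e\leq exe'\cdot e'e$ together with $exe'\leq e'e\cdot z$ one extracts $e\leq e'z_1$, hence $e\rc e'$ and so $e\hc e'$. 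As a secondary remark: your regularity check in the sufficiency direction is not the ``short check'' you claim --- from $L(a)=L(e)$ and $R(a)=R(f)$ with $e,f\in E_\leq(S)$ one readily gets $a\in(aS]\cap(Sa]$, but landing in $(aSa]$ is not immediate in the ordered setting (the paper silently assumes inverses of $a$ exist, so this is a weakness it shares).
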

\begin{proof}
Suppose that $S$ is inverse. Let $I$ be a principal left ideal of
$S$. Then there exists $e\in E_\leq (S)$ such that $I= (Se]$. If
possible let $I= (Sf]$ for some $ f \in E_\leq(S)$. Then $e\lc f$
and thus $e\leq xf$ and $f\leq ye$ for some $x, y\in S$. Now
$e\leq ee\leq eee\leq exfe$. Therefore $exf\leq exfexf$ so that
$exf\in E_\leq(S)$. Also $exf\leq exfexf\leq exf(fe)exf $ and
$fe\leq feee\leq fexfe\leq fe(exf)fe $. Therefore $fe\in V_\leq
(exf)$. Also $ exf\in V_\leq (exf)$. Since $S$ is inverse, we have
 $fe\hc exf$. Then $e\leq ee\leq exf.fe\leq fezexf$ for some $z\in S$, and so $e\leq fz_1$, where $z_1= ezexf$. Similarly $f\leq ez_2$ for some $z_2\in S$. So $e\rc
f$. Hence $e\hc f$. Likewise every principal right ideal of $S$
generated by $\hc$-unique ordered idempotent.

Conversely assume that given conditions hold in $S$. Let $a\in S$
and $a', a''\in V_\leq (a)$. Clearly $(Sa]= (Sa'a]= (Sa''a]$.
Since $a'a, a''a\in E_\leq (S)$ we have that $a'a\hc a''a$, by
given condition. Then there are $s, t\in S$ such that $a'\leq
a''asa'$ and $a''\leq a'ata''$. Thus $a'\rc a''$. Likewise $a'\lc
a''$, that is $a'\hc a''$. Hence $S$ is an inverse ordered
semigroup.
\end{proof}
In the following we show  that an ordered semigroup $S$ is inverse
if and only if any two ordered idempotents of $S$ are
$\hc$-commutative.
\begin{Theorem}\label{5}
The following conditions are equivalent on an ordered semigroup
$S$.
\begin{enumerate}
\item\vspace{-.4cm} $S$ is an inverse semigroup;
\item\vspace{-.4cm} $S$ is regular and its idempotents are
$\hc$-commutative; \item\vspace{-.4cm} For every $e,f\in
E_\leq(S)$, $e\lc f(e\rc f)$ implies $e\hc f$.
\end{enumerate}
\end{Theorem}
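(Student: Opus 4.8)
The plan is to prove the cycle $(1)\Rightarrow(2)\Rightarrow(3)\Rightarrow(1)$. The implication $(3)\Rightarrow(1)$ is essentially a repackaging of the converse direction of Theorem~\ref{7}: given $a\in S$ and $a',a''\in V_\leq(a)$, one has $(Sa]=(Sa'a]=(Sa''a]$ with $a'a,a''a\in E_\leq(S)$, so $a'a\,\lc\,a''a$, and hypothesis~(3) (the $\lc$ version) yields $a'a\,\hc\,a''a$; chasing the resulting inequalities gives $a'\,\rc\,a''$, and symmetrically (using $(aS]=(aa'S]=(aa''S]$ and the $\rc$ version of (3)) $a'\,\lc\,a''$, hence $a'\,\hc\,a''$. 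So $S$ is inverse.

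For $(1)\Rightarrow(2)$: $S$ is regular by definition of inverse ordered semigroup, so only $\hc$-commutativity of idempotents needs proof. Let $e,f\in E_\leq(S)$. I would look for an element that has two ``obviously related'' inverses whose $\hc$-relation forces $ef\leq fxe$. The natural candidate, mirroring the computation in the proof of Theorem~\ref{7}, is to consider a suitable product built from $e$ and $f$; concretely, show $fe\in V_\leq(ef)$ and $ef\in V_\leq(ef)$ (after first checking $ef\leq efef$, i.e. $ef\in E_\leq(S)$, using $e\leq e^2$, $f\leq f^2$ and compatibility of $\leq$ with multiplication), so that inverseness gives $ef\,\hc\,fe$; from $ef\,\lc\,fe$ one extracts $ef\leq fe\cdot s\cdot ef \leq f(\,\text{something}\,)e$, i.e. $ef\leq fye$ for some $y\in S$, which is exactly $\hc$-commutativity.

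For $(2)\Rightarrow(3)$: assume $S$ is regular with $\hc$-commutative idempotents, and let $e,f\in E_\leq(S)$ with $e\,\lc\,f$, so $e\leq xf$, $f\leq ye$ for some $x,y\in S^1$. Then $e\leq e^2\leq e\cdot xf$ and I want to push this to $e\leq (\text{stuff})\,f\cdot e$ using $\hc$-commutativity applied to the idempotents appearing (after absorbing $e$'s and $f$'s via $e\leq e^2$, $f\leq f^2$ to manufacture the idempotents needed). The symmetric argument handles $e\,\rc\,f\Rightarrow e\,\hc\,f$.

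The main obstacle I anticipate is bookkeeping in $(1)\Rightarrow(2)$ and $(2)\Rightarrow(3)$: verifying that the elements one writes down really are ordered idempotents and really lie in $V_\leq(\cdot)$ requires repeatedly inserting $e\leq e^2\leq e^3\leq\cdots$ and $f\leq f^2\leq\cdots$ at the right spots and applying $\hc$-commutativity $ab\leq bxa$ to the correct pair, keeping the extra factor $x\in S$ absorbed into a fresh variable each time — the same style of manipulation as in the proof of Theorem~\ref{7}, but one needs to be careful that the definition of $\hc$-commutativity ($ab\leq bxa$ for \emph{some} $x\in S$) is strong enough to close the loop rather than needing a two-sided version.
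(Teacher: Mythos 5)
Your overall architecture (the cycle $(1)\Rightarrow(2)\Rightarrow(3)\Rightarrow(1)$) is the paper's, and your $(3)\Rightarrow(1)$ is sound: from $a'a\,\lc\,a''a$ and $aa'\,\rc\,aa''$ condition (3) gives the two $\hc$-relations, and substituting into $a'\leq a'aa'$ and $a''\leq a''aa''$ closes the argument. But there is a genuine gap in your $(1)\Rightarrow(2)$. You propose to verify $ef\leq efef$ (so $ef\in E_\leq(S)$) and $fe\in V_\leq(ef)$ ``using $e\leq e^2$, $f\leq f^2$ and compatibility of $\leq$ with multiplication.'' This cannot work: those hypotheses only give $ef\leq e^2f^2=e(ef)f$, and there is no way to swap the inner $e$ and $f$ to reach $efef$ — indeed, in a plain semigroup the product of two idempotents need not be idempotent, and $fe\in V(ef)$ holds in the unordered inverse case precisely \emph{because} idempotents commute, so using it here is circular. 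The correct move (and the one the paper makes) is to invoke regularity to pick an arbitrary $x\in V_\leq(ef)$ and work with $fxe$: from $x\leq xefx$ one gets $fxe\leq (fxe)(fxe)$, so $fxe\in E_\leq(S)$ and hence $fxe\in V_\leq(fxe)$; a similar padding with $e\leq e^2$, $f\leq f^2$ shows $ef\in V_\leq(fxe)$ as well. Now the inverse hypothesis applies to the two inverses $ef$ and $fxe$ of $fxe$, giving $ef\,\hc\,fxe$, and substituting the resulting inequalities into $ef\leq efxef$ yields $ef\leq fye$. Without replacing your candidate pair $(ef,fe)$ by $(ef,fxe)$ the implication does not go through.

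Your $(2)\Rightarrow(3)$ is only a sketch, but it is pointed in the right direction; to complete it you need to name the idempotents explicitly: from $e\leq xf$ and $f\leq ye$ one checks $exf\leq exfexf$ and $fye\leq fyefye$, so $exf,fye\in E_\leq(S)$; then $e\leq exf\leq exf\cdot f\leq (exf)(fye)$, and $\hc$-commutativity of these two idempotents turns this into $e\leq (fye)z(exf)\leq f\cdot(yezexf)$, giving $R(e)\subseteq R(f)$, with the symmetric inequality giving $e\,\rc\,f$ and hence $e\,\hc\,f$. Note also that the one-sided form of $\hc$-commutativity ($ab\leq bxa$ for some $x$) suffices here exactly because the definition is applied once in each direction to the ordered pair $((exf),(fye))$ and its reverse; your worry on that point is resolved by this symmetry, not by needing a stronger two-sided axiom.
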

\begin{proof}
$(1)\Rightarrow(2)$: Obviously $S$ is regular. Let us assume that
$a\in S$ and $a', a''\in V_\leq(a)$. Consider $e, f\in E_\leq(S)$.
Since $S$ is regular, so there is $x\in S$ such that $x\in
V_\leq(ef)$. Now $x\leq xefx$ implies that $fxe\leq fxe(ef)fxe$
and $ef\leq efxef$ implies $ef\leq ef(fxe)ef$. Thus $ef\in
V_\leq(fxe)$. Also $fxe\leq fxefxe$ that is $fxe\in E_\leq (S)$.
So $fxe\in V_\leq(fxe)$. Since $S$ is inverse, so $fxe\hc ef$.
Then there are $s_1, s_2 \in S$ such that $ef\leq fxes_1$ and
$ef\leq s_2fxe$. Now $ef\leq efxef$ implies that $ef\leq
f(xes_1xs_2fx)e$. Therefore $ef\leq fye$, where $y= xes_1xs_2fx$.
Similarly there is $z\in S$ such that $fe\leq ezf$. Hence any two
idempotents are $\hc$-commutative.

$(2)\Rightarrow (3)$: Let $e,f\in E_\leq (S)$ be such that $ e\lc
f$. Then $e\leq xf$ and $f\leq ye$ for some $x,y\in S$. Now $e\leq
xf $ implies $e\leq exf$, and so $e\leq ee\leq exfe$ which implies
that $exf\leq exfexf$. So $exf\in E_{\leq}(S)$. Similarly $fye\in
E_{\leq}(S)$. Now $e\leq exf\leq exff \leq exffye$. Since $exf,
fye\in E_\leq(S)$, by condition (2) we have $exffye\leq
(fye)z(exf)$ for some $z\in S$ . Hence $e\leq ft$, where
$t=yezexf$. Similarly $f\leq ew$ for some $w\in S$, so that $e\rc
f$. Hence $e\hc f$. If $ e\rc f$ then $e\hc f$ can be done dually.

$(3)\Rightarrow (1)$: Let $a\in S$ and $a', a'' \in V_{\leq}(a)$.
Now $aa'\leq aa''aa'$ and $aa''\leq aa'aa''$. So $aa'\rc aa''$
which implies that $aa'\hc aa''$, by the condition (3). Also
$a'a\hc a''a$. Then $a'\leq a'aa'$ gives that $a'\leq a''axa$ for
some $x\in S$. Therefore $a'\leq a''t$ where $t= axa$. In similar
way it is possible to obtained $u,v, w\in S$ such that $a'\leq
ua''$, $a''\leq a'v$ and $a''\leq wa'$. So $a'\hc a''$. Hence $S$
is an inverse ordered semigroup.

\end{proof}
\begin{Lemma}
Let $S$ be an inverse ordered semigroup. Then following statements
hold in $S$.
\begin{enumerate}
\item\vspace{-.4cm}
 $a\lc b$ if and only if $a'a\hc b'b$ for
some $a, b\in S$ and $a'\in V_{\leq}(a)$ $b'\in V_{\leq}(b)$;
\item\vspace{-.4cm}
 $a\rc b$ if and only if $aa'\hc bb'$ for some $a, b\in S$ and $a'\in V_{\leq}(a)$ $b'\in V_{\leq}(b)$;
 \item\vspace{-.4cm}
 for any $a\in S$ and $e\in E_{\leq}(S)$ there are $x, y \in S $ such that $aexa', a'eya\in E_{\leq}(S)$; where $a'\in V_{\leq}(a)$.
\item\vspace{-.4cm} for any $a, b \in S$ there are $x, y\in S$
such that $ab\leq abb'xa'ab$ and $b'a'\leq b'a'aybb'a'$, where
$a'\in V_{\leq}(a)$ and $b'\in V_{\leq}(b)$.
\end{enumerate}
\end{Lemma}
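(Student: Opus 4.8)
My plan is to handle the four assertions one at a time, since each reduces quickly to Theorem~\ref{5} together with regularity.

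\emph{For (1) and (2).} First I would record that $a'a$ and $b'b$ are ordered idempotents: multiplying $a\le aa'a$ on the left by $a'$ gives $a'a\le (a'a)^2$, and similarly for $b'b$. The crucial point is that regularity forces $a\lc a'a$ and $b\lc b'b$: indeed $a'a=a'\cdot a\in Sa$, while $a\le aa'a=a\cdot(a'a)$ puts $a$ in $(S^1 a'a]$, so $L(a)=L(a'a)$, and symmetrically $L(b)=L(b'b)$. Hence $a\lc b$ is equivalent to $a'a\lc b'b$. Now if $a\lc b$ then $a'a\lc b'b$, and since both members are ordered idempotents, Theorem~\ref{5}~(3) upgrades this to $a'a\hc b'b$; conversely, if $a'a\hc b'b$ for some inverses, then $a'a\lc b'b$, whence $a\lc b$. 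This settles (1), and (2) follows dually, using principal right ideals and the ordered idempotents $aa'$, $bb'$.

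\emph{For (3).} The idea is to choose the auxiliary elements as inverses of the composite elements, not of $a$ or $e$ separately. Since $S$ is regular I would pick $x\in V_\leq(a'ae)$ and $y\in V_\leq(aa'e)$, so that in particular $x\le x(a'ae)x$ and $y\le y(aa'e)y$. Multiplying the first inequality on the left by $ae$ and on the right by $a'$ gives $aexa'\le ae\,[x(a'ae)x]\,a'=(aexa')^2$, hence $aexa'\in E_\leq(S)$; multiplying the second on the left by $a'e$ and on the right by $a$ gives $a'eya\le a'e\,[y(aa'e)y]\,a=(a'eya)^2$, hence $a'eya\in E_\leq(S)$. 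This item needs only regularity of $S$.

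\emph{For (4).} Here I would use that, $S$ being inverse, its ordered idempotents are $\hc$-commutative (Theorem~\ref{5}~(2)). Since $a'a$ and $bb'$ are ordered idempotents there is $x\in S$ with $(a'a)(bb')\le(bb')x(a'a)$; starting from $ab\le (aa'a)(bb'b)=a\,(a'a)(bb')\,b$ and substituting yields $ab\le a\,(bb')x(a'a)\,b=(ab)b'xa'(ab)$. For the second inequality, apply $\hc$-commutativity to the pair $bb',a'a$ to get $y\in S$ with $(bb')(a'a)\le(a'a)y(bb')$; then from $b'a'\le (b'bb')(a'aa')=b'\,(bb')(a'a)\,a'$ one gets $b'a'\le b'\,(a'a)y(bb')\,a'=(b'a'a)y(bb'a')$. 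The only genuinely delicate point I anticipate is the choice of $x,y$ in (3): once one realizes these should be inverses of $a'ae$ and $aa'e$, the idempotency statements are forced by associativity and the order-compatibility of multiplication. In (1), (2) and (4) the substantive input is Theorem~\ref{5}; the rest is routine rearrangement of products.
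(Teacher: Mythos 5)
Your proposal is correct, and for parts (1), (2) and (4) it coincides with the paper's own argument: the same reduction of $a\lc b$ to $a'a\lc b'b$ via $a\lc a'a$, $b\lc b'b$, followed by Theorem~\ref{5}(3) (and its dual for (2)), and the same use of the $\hc$-commutativity of the ordered idempotents $a'a$ and $bb'$ to squeeze $ab\leq a(a'a)(bb')b\leq (ab)b'x a'(ab)$ and $b'a'\leq b'(bb')(a'a)a'\leq b'a'ay bb'a'$ in (4). (Incidentally, your converse in (1) fixes a slip in the paper, which writes $aa'\hc bb'$ where $a'a\hc b'b$ is meant.) The one genuine divergence is part (3): the paper again invokes Theorem~\ref{5}(2), taking $x$ from the $\hc$-commutativity relation $a'ae\leq exa'a$ and then verifying $aexa'\leq aa'aeexa'\leq (aexa')^2$, whereas you take $x\in V_\leq(a'ae)$ and $y\in V_\leq(aa'e)$ and read off $aexa'\leq ae\,x(a'ae)x\,a'=(aexa')^2$ directly. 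Your route is slightly cleaner and, as you note, uses only regularity rather than the inverse hypothesis, so it actually proves (3) for arbitrary regular ordered semigroups; the paper's version makes (3) look like a consequence of inverseness when it is not. Both computations are sound.
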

\begin{proof}
(1): Let $a,b\in S$ be such that $a\lc b$. Let $a'\in
V_{\leq}(a)$, $b'\in V_{\leq}(b)$. Since $a\leq aa'a$ and $a'a\leq
a'aa'a$, we have $a\lc a'a$ which implies that $b\lc a'a$. Also
$b\lc b'b$. Hence $a'a\lc b'b$. Since $a'a, b'b\in E_{\leq}(S)$
and  $S$ is inverse we have $a'a\hc b'b$, by Theorem \ref{5}(3).

Conversely suppose that given condition holds in $S$. Let $a, b
\in S$ with $a'\in V_\leq (a)$ and $b'\in V_\leq (b)$. Then by
given condition $aa'\hc bb'$. Also we have $a\lc a'a$ and $b\lc
b'b$ so that $a\lc b$.

(2): This is similar to (1).

(3): Let $a\in S$ and $e\in E_{\leq}(S)$. Also $a'a\in
E_{\leq}(S)$. Since $S$ is an inverse, there is an $x\in S$ such
that $a'ae\leq exa'a$ by Theorem \ref{5}(2). Now $aexa'\leq
aa'aeexa'\leq aexa'aexa'$. So $aexa'\in E_{\leq}(S)$. Likewise
$a'eya\in E_{\leq}(S)$; for some $y\in S$.

(4): Let $a,b \in S$ with  $a'\in V_{\leq}(a)$, $b'\in
V_{\leq}(b)$. So $a'a, b'b\in E_\leq (S)$. Now $ab\leq
aa'abb'b\leq$ and $a'abb'\leq b'bxa'a$, by Theorem \ref{5}(2).
Thus $ab\leq abb'xa'ab$. Likewise $b'a'\leq b'a'aybb'a'$; for some
$y\in S$.
\end{proof}
In the following theorem an inverse ordered semigroup has been
characterized by the inverse of an element of the set $(eSf]$.

\begin{Theorem}
Let $S$ be an ordered semigroup and $e, f \in E_\leq (S)$. Then
$S$ is inverse if and only if for every $x\in (eSf]$ implies
$x'\in (fSe]$, where $x'\in V_\leq (x)$.
\end{Theorem}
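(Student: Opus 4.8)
The plan is to prove both implications using the idempotent-based characterizations already established, chiefly Theorem \ref{5} (idempotents are $\hc$-commutative) and the computation technique used repeatedly in Theorem \ref{7}: given two elements one shows each is an inverse of the other (or of a common idempotent), then invokes the inverse hypothesis to force an $\hc$-relation.

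For the forward direction, suppose $S$ is inverse and let $x \in (eSf]$, say $x \leq e s f$ for some $s \in S$, with $x' \in V_\leq(x)$. I would first aim to show $x' \in (fS'e]$ where $S' = S^1$, and then absorb the possible unit. The key is to manipulate $x' \leq x'xx'$ together with $x \leq esf$: substituting gives $x' \leq x'(esf)x'$, which already exhibits $x'$ as bounded above by something of the form $(x'es)f(\cdot)$ — but I need the element to lie in $(fSe]$, i.e. to be squeezed between $f$ on the left and $e$ on the right. To get $f$ on the left I would use that $x \leq esf \leq esff$ (since $f \leq f^2$), so $x \leq esf \cdot f$, hence $x' \leq x'xx' \leq x'(esff)x' = (x'esf)f x'$; more work is needed to produce the leading $f$. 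Here I would bring in Lemma (the preceding one), part (2) or (3): from $x \leq esf$ one shows $xx' \hc$ something involving $e$, and dually $x'x \hc$ something involving $f$, using $\hc$-commutativity of ordered idempotents to shuffle $e$ and $f$ past the relevant idempotents $xx'$, $x'x$. Concretely, $x'x \leq x'x$ is an idempotent $\lc$-related to $x$, which is $\lc$-related (via $x \leq esf$) to an idempotent below $f$; by Theorem \ref{5}(3) these idempotents are $\hc$-related, giving $x'x \leq f t$ for some $t \in S^1$. Similarly $xx' \leq e r$ becomes (after taking a suitable inverse / using $xx' \rc x \rc$ an idempotent near $e$) something yielding $x' \leq x'xx' = x'(xx')x'$ with $xx'$ replaceable up to $\hc$ by an idempotent with a leading $e$ on the right side, so that $x' \leq (f\cdot)(\cdot e)$, i.e. $x' \in (fS^1e]$. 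Finally I would remove the unit: if the factor $1$ appears, use regularity of $S$ ($f \leq f^2$, $e \leq e^2$) to reabsorb it into $S$, exactly as is done tacitly in Theorem \ref{7}.

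For the converse, assume that $x \in (eSf]$ implies $x' \in (fSe]$ for all $e,f \in E_\leq(S)$ and all $x' \in V_\leq(x)$. By Theorem \ref{5} it suffices to show $S$ is regular (which I would need to check is part of the standing hypotheses or derive — see the obstacle below) and that any two ordered idempotents $e,f$ are $\hc$-commutative, i.e. $ef \leq f x e$ and $fe \leq e y f$ for some $x,y \in S$. Take $e, f \in E_\leq(S)$. Then $ef \leq e \cdot 1 \cdot f$ hmm — more usefully $ef \leq eef \cdot f$ shows $ef \in (eSf]$ after absorbing: $ef \leq e(ef)f$ since $e \leq e^2$, $f\leq f^2$, so $ef \in (eSf]$. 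If $ef$ is regular pick $x \in V_\leq(ef)$; as computed in the proof of Theorem \ref{5}, $fxe \in E_\leq(S)$ and $ef \in V_\leq(fxe)$ and $fxe \in V_\leq(fxe)$. Now I claim $fxe \in (fSe]$: indeed $fxe \leq fxe \cdot fxe \leq f(xefx)e$, so $fxe \in (fSe]$ trivially. That is not yet the hypothesis applied to $ef$; instead apply the hypothesis to the element $ef \in (eSf]$ with the chosen inverse $x \in V_\leq(ef)$, concluding $x \in (fSe]$, say $x \leq f w e$. Then $ef \leq ef x ef \leq ef(fwe)ef = e(ffwe e)f \cdot$ — rearranging, $ef \leq (ef f) w (e ef) = e(fwe)ef$, and pushing: $fe \leq$ dually. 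The point is that $x \leq fwe$ immediately gives $ef \leq efxef \leq e f(fwe) ef$, from which $fe$-commutativity type inequalities $ef \leq f(\cdots)e$ follow by collapsing the outer $e$'s and $f$'s using idempotency, yielding $\hc$-commutativity. Then Theorem \ref{5} gives that $S$ is inverse.

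The main obstacle will be the bookkeeping in the forward direction: producing a \emph{leading} $f$ (resp. trailing $e$) on the bound for $x'$ when the natural substitution $x' \leq x'xx' \leq x'(esf)x'$ only places $e,f$ in the \emph{interior}. This is where one must invest the $\hc$-commutativity of ordered idempotents (Theorem \ref{5}(2)) to transport $e$ and $f$ outward past the idempotents $xx'$ and $x'x$, combined with the Green's-relation facts $x \lc x'x$, $x \rc xx'$ from the preceding Lemma. A secondary subtlety is whether "$S$ is an ordered semigroup" in the statement silently includes regularity (the definition of inverse ordered semigroup requires regularity, and Theorem \ref{7} and Theorem \ref{5} both carry it); in the converse I will either assume regularity is part of the hypothesis or note that it follows from the stated condition by applying it with suitable idempotents. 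Once these two points are handled, the remaining manipulations are the same routine substitutions $t \leq tt't$, $e \leq e^2$, $f \leq f^2$ used throughout the paper.
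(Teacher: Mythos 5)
Your forward direction is essentially the paper's argument: from $x\leq es_1f$ one checks that $x'es_1f$ and $es_1fx'$ are ordered idempotents and then applies Theorem \ref{5}(2) twice --- once to commute the idempotent $f$ past $x'es_1f$ (producing the leading $f$) and once to commute $e$ past $es_1fx'$ (producing the trailing $e$) --- so that $x'\leq x'xx'\leq fs_2x'es_1fx's_3e\in(fSe]$. Your side remark that $x'x$ is $\lc$-related to an idempotent below $f$ and hence $\hc$-related to it by Theorem \ref{5}(3) is not justified ($x\leq es_1f$ gives only the one inclusion $L(x)\subseteq L(f)$, not $x\lc f$), but the $\hc$-commutativity mechanism you also name is the one that carries the proof, so this half is workable.

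The converse is where the proposal genuinely fails. You apply the hypothesis to $ef\in(eSf]$, obtain $x\in V_\leq(ef)$ with $x\leq fwe$, and then claim that $ef\leq efxef\leq ef(fwe)ef$ yields $ef\leq f(\cdots)e$ ``by collapsing the outer $e$'s and $f$'s using idempotency.'' It does not: the bound $ef(fwe)ef=e(f^2we^2)f$ is of the trivial shape $e(\cdots)f$, and the only tools available, $e\leq e^2$ and $f\leq f^2$, let you \emph{insert} factors, never delete the outermost $e$ on the left or $f$ on the right; so no such manipulation converts it into the shape $f(\cdots)e$ required for $\hc$-commutativity. The hypothesis applied to $ef$ and its inverses returns nothing beyond what is trivially true ($x\in(fSe]$ is automatic for $x=fxe$-type inverses), and condition (2) of Theorem \ref{5} cannot be reached this way. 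The paper's converse avoids the problem by applying the hypothesis to an \emph{idempotent} rather than to a product: if $(Se]=(Sf]$ then $e\leq e(ex)f$, so $e\in(eSf]$, and since $e\in V_\leq(e)$ the hypothesis gives $e\in(fSe]\subseteq(fS]$; dually $f\in(eS]$, whence $e\rc f$ and so $e\hc f$. This establishes the $\hc$-uniqueness of idempotent generators of principal one-sided ideals and concludes via Theorem \ref{7} (equivalently, it verifies condition (3) of Theorem \ref{5}, which is the condition your hypothesis actually delivers). Your concern about regularity in the converse is legitimate but is shared by the paper, which assumes it silently.
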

\begin{proof}

First suppose that $S$ is an inverse ordered semigroup and $x\in
(eSf]$. Then $x\leq es_1f$ for some $s_1\in S$. Let $x'\in V_\leq
(x)$. Now $x'\leq x'xx'\leq x'es_1fx'$, and so $es_1fx'\leq
es_1fx'es_1fx'$. Hence $es_1fx'\in E_\leq (S)$. Similarly
$x'es_1f\in E_\leq (S)$. Now there is $s_2\in S$ such that
$x'es_1fx'\leq x'es_1ffx'\leq fs_2x'es_1fx'$, by Theorem
\ref{5}(2) . Also $fs_2x'es_1fx'\leq fs_2x'ees_1fx'\leq
fs_2x'es_1fx's_3e$,  for some $s_3\in S$. Then $x'\leq x'xx'$
implies that $x'\leq fs_2x'es_1fx'\leq fs_2x'es_1fx's_3e$. Hence
$x'\in (fSe]$.

Conversely assume that the given conditions hold in $S$. First
consider a left ideal $L$ of $S$ such that $L= (Se]= (Sf]$ for $e,
f\in E_\leq (S)$. Then $e\lc f$, so that $e\leq ee\leq ezf$ for
some $z\in S$. Therefore $e\in (eSf]$. Since $e\in V_\leq (e)$ we
have $e\in (fSe]$, by given condition. Likewise $f\in (eSf]$. This
implies that $e\rc f$ and so $e\hc f$. Similarly it can be shown
that every principal right ideal of $S$ generated by $\hc$-unique
ordered idempotent. Thus by Theorem \ref{7}, $S$ is an inverse
ordered semigroup.
\end{proof}

\begin{Corollary}
The following conditions are equivalent on a regular ordered
semigroup $S$.
\begin{enumerate}
\item\vspace{-.4cm}
 $S$ is an inverse ordered semigroup;
 \item\vspace{-.4cm}
 for any $a\in S$ and for any $a'\in V_\leq(a)$, $aa', a'a$ are
 $\hc$-commutative;
 \item\vspace{-.4cm}
 for any $e\in E_\leq (S)$, any two inverses of $e$ are
 $\hc$-related;
 \item\vspace{-.4cm}
 for any $e\in E_\leq (S)$ and all its inverses are
 $\hc$-commutative;
 \item\vspace{-.4cm}
 for any $e\in E_\leq (S)$ and $e'\in V_\leq (e)$, $ee'$ and $e'e$
 are $\hc$-commutative.
\end{enumerate}
\end{Corollary}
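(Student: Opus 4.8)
The plan is to use Theorem~\ref{5} as the hub and to arrange the five statements so that only two implications require real work. Recall that for a regular ordered semigroup, Theorem~\ref{5} makes ``$S$ inverse'' equivalent to ``$E_\leq(S)$ is $\hc$-commutative'' and to ``$e\lc f\Rightarrow e\hc f$ for $e,f\in E_\leq(S)$''. I would prove $(1)\Rightarrow(2)\Rightarrow(5)\Rightarrow(1)$ together with $(1)\Rightarrow(4)\Rightarrow(5)$ and $(1)\Rightarrow(3)\Rightarrow(1)$; every remaining pair then follows through $(1)$.

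For the implications out of $(1)$: if $S$ is inverse then, since $a\le aa'a$ forces $aa'\le aa'aa'$ and $a'a\le a'aa'a$, the elements $aa'$ and $a'a$ lie in $E_\leq(S)$, so they are $\hc$-commutative by Theorem~\ref{5}(2); this is $(2)$, and $(5)$ is its special case $a\in E_\leq(S)$. Statement $(3)$ is just the definition of an inverse ordered semigroup read for the element $e\in E_\leq(S)$. For $(4)$ one first checks, using only $e\le e^2$ and $e\le efe$, that $ee'$ and $e'e$ belong to $V_\leq(e)$ whenever $e\in E_\leq(S)$ and $e'\in V_\leq(e)$; granting this, given $f,g\in V_\leq(e)$ one has $f\hc g$ since $S$ is inverse, and combining the $\lc$- and $\rc$-witnesses for $f,g$ with $f\le fef$, $g\le geg$ produces $fg\le gxf$ for a suitable $x\in S$ (a witness lying a priori in $S^1\setminus S$ is absorbed by one further multiplication by $e$). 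The same observation $ee',e'e\in V_\leq(e)$ gives $(4)\Rightarrow(5)$ at once.

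The two substantive implications are $(3)\Rightarrow(1)$ and $(5)\Rightarrow(1)$. For $(3)\Rightarrow(1)$ I would rerun the argument used in Theorem~\ref{7}: given $e,f\in E_\leq(S)$ with $e\lc f$, write $e\le xf$ and $f\le ye$, show $exf\in E_\leq(S)$, and verify that both $fe$ and $exf$ lie in $V_\leq(exf)$; since $exf\in E_\leq(S)$, hypothesis $(3)$ forces $fe\hc exf$, and from $e\le(exf)(fe)$ together with the Green data for $fe$ and $exf$ one obtains $e\in(fS^1]$, hence, symmetrically, $e\rc f$ and so $e\hc f$; Theorem~\ref{5}(3) then yields $(1)$. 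For $(5)\Rightarrow(1)$, take $e,f\in E_\leq(S)$ and, by regularity, choose $x\in V_\leq(ef)$; as in the proof of Theorem~\ref{5} the element $g=fxe$ is an ordered idempotent with $ef\in V_\leq(g)$. Applying $(5)$ to this $g$ and its inverse $g'=ef$, the products $gg'=fxeef$ and $g'g=effxe$ are $\hc$-commutative; the remaining task is to strip the outer $f(\cdot)e$ and $e(\cdot)f$ from $gg'$ and $g'g$ using $e,f\in E_\leq(S)$ and $ef\le efxef$, and thereby distil from this single $\hc$-commutativity relation the inequality $ef\le f(\cdot)e$, i.e.\ $\hc$-commutativity of $e$ and $f$; then Theorem~\ref{5}(2) gives $(1)$.

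The main obstacle is $(5)\Rightarrow(1)$. Here the hypothesis is extremely thin --- it asserts $\hc$-commutativity only for the single pair $ee',e'e$ attached to a fixed ordered idempotent $e$ and a fixed inverse $e'$ --- so the crux is to choose the auxiliary ordered idempotent $g$ and its inverse $g'$ precisely enough that this lone instance propagates to $\hc$-commutativity of an arbitrary pair of ordered idempotents; if the obvious choice $g=fxe$, $g'=ef$ does not absorb cleanly, a more elaborate conjugate (for instance one built from the $(eSf]$-characterization in the preceding theorem) would be needed. A secondary, purely bookkeeping nuisance running through the whole argument is verifying ordered idempotency of each auxiliary product ($u\le u^2$ in every case) and handling Green-relation witnesses that a priori live in $S^1$ rather than in $S$.
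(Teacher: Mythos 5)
Your proposal is correct and matches the paper's proof in its essential content: the paper likewise reduces everything to the single substantive implication $(5)\Rightarrow(1)$, carried out with exactly your auxiliary pair $g=fxe\in E_\leq(S)$, $g'=ef\in V_\leq(g)$, and the stripping you were unsure about does go through cleanly, since $ef\leq effxeef\leq (effxe)(fxeef)\leq (fxeef)z(effxe)\leq f z' e$ with $z'=xeefzeffx\in S$. The only difference is organizational: the paper runs the single cycle $(1)\Rightarrow(2)\Rightarrow(3)\Rightarrow(4)\Rightarrow(5)\Rightarrow(1)$ and dismisses the first four steps as obvious, whereas you route $(3)\Rightarrow(1)$ directly through the argument of Theorem~\ref{7}.
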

\begin{proof}
$(1)\Rightarrow (2)$, $(2)\Rightarrow (3)$, $(3)\Rightarrow (4)$,
and $(4)\Rightarrow (5)$: These are obvious.

 $(5)\Rightarrow (1)$: Let $e, f\in E_\leq(S)$ and $x\in V_\leq
 (ef)$. So $ef\leq efxef\leq effxeef$ and $x\leq xefx$ implies that
 $fxe\leq fxeeffxe$. So $ef\in V_\leq (fxe)$. Also $fxe\in E_\leq
 (S)$. Now $ef\leq efxef\leq effxeef\leq effxefxeef\leq fxez_1efz_2fxe$,
 for some $z_1, z_2\in S$, by the given condition. So $ef\leq fz_3e$ where
 $z_3= xemefnfx$. Similarly  $fe\leq ez_4f$, for some
 $z_4\in S$. So $e, f$ are $\hc$-commutative. Hence by Theorem \ref{5} $S$ is inverse
 ordered semigroup.
\end{proof}
We study  inverse ordered semigroup together with  complete
regularity  in the following theorem.
\begin{Theorem}
The following conditions are equivalent on a regular ordered
semigroup $S$.

\begin{enumerate}
\item\vspace{-.4cm} $S$ is inverse and completely regular;
\item\vspace{-.4cm} $S$ is a complete semilattice of group like
ordered semigroups; \item\vspace{-.4cm}
 $ab\hc ba$ whenever $ab, ba \in E_{\leq}(S)$;
\item\vspace{-.4cm} any ordered idempotent of $S$ is
$\hc$-commutative to any element of $S$; \item\vspace{-.4cm}
 for any $e,f \in E_{\leq}(S)$ $e\jc f$ implies $e\hc f$;
\item\vspace{-.4cm}
 $\hc=\lc=\rc=\jc$.
\end{enumerate}
\end{Theorem}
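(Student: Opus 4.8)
The plan is to prove the six statements equivalent by the cycle $(1)\Rightarrow(2)\Rightarrow(3)\Rightarrow(4)\Rightarrow(5)\Rightarrow(6)\Rightarrow(1)$, using the standing regularity of $S$ to produce ordered idempotents freely: if $a\le axa$ then $ax,xa\in E_\leq(S)$ with $a\rc ax$, $a\lc xa$, and for $a'\in V_\leq(a)$ the idempotents $aa'$, $a'a$ satisfy $aa'\rc a$, $a'a\lc a$. The structural weight sits in $(1)\Rightarrow(2)$ and $(6)\Rightarrow(1)$; the intervening arrows are either read off the semilattice decomposition or are short $\hc$-commutativity manipulations of the kind in the proof of Theorem \ref{5}.

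For $(1)\Rightarrow(2)$ I would apply Lemma \ref{1}: complete regularity makes $\jc$ the least complete semilattice congruence and displays $S$ as a complete semilattice of completely simple ordered semigroups, the classes being the $\jc$-classes, so it suffices to show each class $T$ is group-like. The inverse property descends to $T$ because $b\in V_\leq(a)$ forces $\bar a=\bar a\bar b\bar a=\bar a\bar b=\bar b$ in $S/\jc$, so all inverses of an element of $T$ lie in $T$ and, being $\hc$-related in $S$, are $\hc$-related in $T$. The decisive point is then the sub-lemma: \emph{a completely simple ordered semigroup is group-like if and only if it is inverse} --- equivalently, a completely simple ordered semigroup whose ordered idempotents are pairwise $\hc$-related satisfies $(Tu]=T=(uT]$ for every $u$.

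In the middle of the cycle, $(2)\Rightarrow(3)$ is immediate: $\overline{ab}=\bar a\bar b=\bar b\bar a=\overline{ba}$ puts $ab$ and $ba$ in a common group-like component, which is a single $\hc$-class of $S$ (for $u,v$ there, $u\in(Tv]\subseteq(Sv]$ and symmetrically, so $u\lc v$; dually $u\rc v$), so $ab\hc ba$. The steps $(3)\Rightarrow(4)$ and $(4)\Rightarrow(5)$ are the most computational: $(3)$ first makes $S$ inverse by imitating the proof of Theorem \ref{5} --- for ordered idempotents $e\lc f$ with $e\le xf$, $f\le ye$, the elements $exf$, $fye$, $(exf)(fye)$, $(fye)(exf)$ all turn out to be ordered idempotents (a short check from $e\le e^2$, $f\le f^2$, $e\le exffye$), so $(3)$ gives $(exf)(fye)\hc(fye)(exf)$ and one extracts $e\rc f$, hence $e\hc f$ --- after which ``idempotents $\hc$-commute pairwise'' (Theorem \ref{5}) is promoted to ``$e$ is $\hc$-commutative with every element'' by pushing $e$ through a regular factorisation of that element, and similarly $(5)$ is obtained from $(4)$ by enlarging $e\jc f$ to suitably shaped ordered idempotents and applying $\hc$-commutativity of $f$. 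Then $(5)\Rightarrow(6)$ is clean: $a\jc b$ gives $a'a\jc b'b$ and $aa'\jc bb'$ in $E_\leq(S)$, so $(5)$ yields $a'a\hc b'b$, $aa'\hc bb'$, hence $L(a)=L(a'a)=L(b'b)=L(b)$ and $R(a)=R(aa')=R(bb')=R(b)$, i.e. $a\hc b$; with $\hc\subseteq\lc,\rc\subseteq\jc$ always, this is $\hc=\lc=\rc=\jc$.

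For $(6)\Rightarrow(1)$: from $\hc=\jc$, each $b\in V_\leq(a)$ has $I(a)=I(b)$ (from $a\le aba$, $b\le bab$) so $a\jc b$, hence $a\hc b$, so $S$ is inverse; and $S$ is completely regular since $\hc=\lc$ makes every $\hc$-class contain an ordered idempotent (every $\lc$-class of a regular ordered semigroup does) and an $\hc$-class meeting $E_\leq(S)$ consists of completely regular elements. The main obstacle is the sub-lemma inside $(1)\Rightarrow(2)$ --- that the inverse hypothesis collapses the rectangular-band structure of a completely simple ordered semigroup to make it group-like; classically this is ``completely simple with all idempotents $\mathcal H$-related $\Rightarrow$ group'', and in the ordered setting it has to be argued through downward closures, bootstrapping the simplicity identity $(TuT]=T$ up to $(Tu]=T=(uT]$ using the idempotents attached to $u$. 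A secondary difficulty is the last clause of $(6)\Rightarrow(1)$, that an $\hc$-class meeting $E_\leq(S)$ consists of completely regular elements, which must be shown directly rather than quoted from Lemma \ref{1} (whose hypothesis is precisely complete regularity).
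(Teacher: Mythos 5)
Your cycle $(1)\Rightarrow(2)\Rightarrow\cdots\Rightarrow(6)\Rightarrow(1)$ matches the paper's, and several arrows are handled correctly --- your $(5)\Rightarrow(6)$ via $a'a\jc b'b$, $aa'\jc bb'$ and your derivation of the inverse property in $(6)\Rightarrow(1)$ from $a\jc a'$ are in fact cleaner than the paper's versions. But there is a genuine gap exactly where you place ``the main obstacle'': in $(1)\Rightarrow(2)$ you reduce everything to the sub-lemma that a completely simple inverse ordered semigroup is group-like, and you never prove it; the analogy with the unordered fact ``completely simple with all idempotents $\hc$-related $\Rightarrow$ group'' does not transfer mechanically, because in the ordered setting an $\hc$-class may contain many ordered idempotents, so the classical argument (an $\hc$-class contains at most one idempotent, hence there is only one) is unavailable. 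The paper takes a different and self-contained route here: it never isolates that sub-lemma, but instead proves $\jc\subseteq\hc$ directly. From $a\leq xby$ it uses complete regularity to write $x\leq x^2hx$, $b\leq b^2gb\leq bgb^2$, $y\leq yfy^2$, then manufactures two inverses $b^2gpx^2h$ and $x^2hxb^2gp^2$ of the single element $x^2hxb^2g$ (where $p\in V_\leq(x^2hxb^2g)$), and applies the inverse hypothesis to these to pull a factor $b$ to the front of $a$; three more symmetric runs give $a\hc b$. Whatever one thinks of that computation, it is an actual argument, whereas your proposal at this point is a statement of what must be argued.

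A secondary gap of the same kind sits in $(6)\Rightarrow(1)$: you flag that ``an $\hc$-class meeting $E_\leq(S)$ consists of completely regular elements'' must be shown directly, but you do not show it, and the naive manipulations ($a\leq ez$, $e\leq ua$, $e\leq aw$, \dots) only land you in $(Sa^2Sa^2S]$ rather than $(a^2Sa^2]$. The paper's route avoids the detour through idempotent-bearing $\hc$-classes entirely: from $\lc=\rc$ one gets $a\rc a'a$ and $a\lc aa'$, hence $a'a\leq as_1$ and $aa'\leq s_2a$, and then
\[
a\leq a(a'a)(a'a)(a'a)\leq a(as_1)\,a'(aa')a\leq a^2s_1a'(s_2a)a=a^2(s_1a's_2)a^2 ,
\]
where the presence of the explicit inverse $a'$ in the middle is what lets the chain terminate in $a^2$ on both sides. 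You should either adopt this computation or supply one of your own; as written, both of the steps you yourself identify as the difficulties are left open, so the proposal does not yet constitute a proof.
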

\begin{proof}
$(1)\Rightarrow (2)$: Let $S$ be a completely regular and inverse
ordered semigroup. Then  by Theorem \ref{1},  $\jc$ is the
complete semilattice congruence on $S$ and every $\hc$-class is a
group-like ordered semigroup. We now prove $\hc= \jc$. Let $a,
b\in S$ be such that $a\jc b$. So there are $x, y, u, v\in S$ such
that $a\leq xby$ and $b\leq uav$. Since $S$ is completely regular,
so there are $h, g, f\in S$ such that $x\leq x^2hx$, $b\leq
b^2gb$, $b\leq bgb^2$, $y\leq yfy^2$. Now $a\leq x^2hxb^2gbyfy^2
\leq x^2hxb^2gbgb^2yfy^2$. Let $p\in V_\leq (x^2hxb^2g)$. So
$x^2hxb^2g\leq x^2hxb^2gpx^2hxb^2g\leq
x^2hxb^2g(b^2gpx^2h)x^2hxb^2g$ and $b^2gpx^2h\leq
b^2gpx^2hxb^2gpx^2h\leq b^2gpx^2h(x^2hxb^2g)b^2gpx^2h$. This shows
that $b^2gpx^2h\in V_\leq (x^2hxb^2g)$. Also $x^2hxb^2g\leq
x^2hxb^2gpx^2hxb^2g\leq x^2hxb^2g(x^2hxb^2gp^2)x^2hxb^2g$ and
$x^2hxb^2gp^2\leq x^2hxb^2gpx^2hxb^2gp^2\leq
x^2hxb^2gp^2(x^2hxb^2g)x^2hxb^2gp^2$, which implies that
$x^2hxb^2gp^2\in V_\leq (x^2hxb^2g)$. Similarly $p^2x^2hxb^2g\in
V_\leq (x^2hxb^2g)$. Since $b^2gpx^2h,  x^2hxb^2gp^2\in V_\leq
(x^2hxb^2g)$ and $S$ is inverse, so there is $t\in S$ such that
$x^2hxb^2gp^2\leq b^2gpx^2ht$. Thus $x^2hxb^2g\leq
x^2hxb^2gpx^2hxb^2g\leq x^2hxb^2gp^2(x^2hxb^2g)^2$ implies that
$x^2hxb^2g\leq b^2gpx^2hxt(x^2hxb^2g)^2= bs$ where $s=
bgpx^2ht(x^2hxb^2g)^2$. Similarly there is $s_1\in S$ such that
$b^2gyfy^2\in s_1b$. Hence $a\leq x^2hxb^2gbyfy^2\leq bsbyfy^2=
bs_2$, where $s_2= sbyfy^2$. Similarly $a\leq s_3b$ for some
$s_3\in S$. Likewise $b\leq s_4a$ and $b\leq as_5 $, for some
$s_4, s_5\in S$. So $a\hc b$. Thus $\jc\subseteq \hc$. Also
$\hc\subseteq \jc$, and Hence $\jc= \hc$. Therefore $S$ is
complete semilattice of group-like ordered semigroups.

 $(2)\Rightarrow (3)$: Suppose that  $S$
is a complete semilattice $Y$ of group like ordered semigroups $
\{S_\alpha\}_{\alpha\in Y}$. Let $a, b\in S$ such that $ab, ba \in
E_{\leq}(S)$. Let $\rho$ be the corresponding semilattice
congruence on $S$. Then there is $\alpha \in Y$ such that $ab,ba
\in S_\alpha$ . Since $S_\alpha$ is
 group like ordered semigroups so $ab\hc ba$.

$(3)\Rightarrow (4)$: Let $a\in S$ and $e\in E_{\leq}(S)$. Since
$S$ is regular there is an $x\in S$ such that $a\leq axa$. Clearly
$ax,xa\in E_{\leq}(S)$. Thus by condition (3) $ax\hc xa$. So
$xa\leq axu$ and $ax\leq vxa$, for some $u, v\in S$. Then
 we have $a\leq axa\leq axaxa\leq axaxaxa\leq aaxuxvxaa=a^2ta^2$, where
$t=xuxvx$. Now $a\leq a^2ta^2\leq a(a^2ta^2ta^2ta^2)a\leq
a^2(a^2ta^2ta^2ta^2ta^2)a$, that is $a\leq a^2ya$, where $y=
a^2ta^2ta^2ta^2ta^2$. Similarly $a\leq aya^2$. Clearly $a^2y,
ya^2\in E_\leq (S)$.

Let $e,f \in E_{\leq}(S)$ and $x\in V_{\leq}(ef)$. Then we have
$x\leq xefx$. So $fxe\leq fxefxe\leq fxeeffxe$ and $ef\leq
efxef\leq effxeef$. So $ef\in V_{\leq}(fxe)$. Also $ef\leq
effxeef$ implies that $effxe\leq effxeeffxe$, and $fxeef\leq
fxeeffxeef$. So $effxe, fxeef\in E_{\leq}(S)$ and thus $effxe\hc
fxeef$, by the condition(3). Then there are $u,v\in S$ such that
$effxe\leq fxeefu$ and $fxeef\leq veffxe$. Now $ef\leq
effxefxeef\leq fxeefuveffxe= fce$; where $c=xe^2fuvef^2x$.
Likewise $fe\leq edf$, for some $d\in S$.

Now $ae\leq a^2yae$. Let $z\in V_\leq (a^2yae)$. So $a^2yae\leq
a^2yaeza^2yae\leq a^2yae(eza^2y)a^2yae$. Clearly $a^2yaeeza^2y,
eza^2ya^2yae\in E_\leq (S)$ and thus $a^2yaeeza^2y\hc
eza^2ya^2yae$, by condition (3). Now $ae\leq a^2yae\leq
a^2yaeeza^2ya^2yae\leq eza^2ys_1a^2yaea^2yae$, for some $s_1\in
S$. So $ae\leq es_2ae$, where $s_2= za^2ys_1a^2yaea^2y$. Again
$ae\leq es_2 aya^2e\leq es_2aes_3ya^2 $, for some $s_3\in S$,
since $ya^2, e\in E_\leq (S)$. That is $ae\leq es_4a$, for some
$s_4\in S$. Similarly $ea\leq as_5e$, for some $s_5\in S$. So $a,
e$ are $\hc$-commutative.

$(4)\Rightarrow (5)$: Let $e,f\in E_{\leq}(S)$ such that $e\jc f$.
Then there are $x, y, z, u\in S$ such that $e\leq xfy$ and $f\leq
zeu$. Now $e\leq xfy$ implies that $e\leq fhxy$ and $e\leq xykf$
by the given condition for some $h,k \in S$. Similarly $f\leq zeu$
gives $f\leq es_1zu$ and $f\leq zus_2e$ for some $s_1, s_2 \in S$.
Hence $e\hc f$.

$(5)\Rightarrow (6)$: Let $a, b \in S$ such that $a\jc b$. Then
there are $s,t,u,v \in S$ such that $a\leq sbt$ and $b\leq uav$.
Since $S$ is regular so $a\leq axa$ and $b\leq byb$ for some $x,
y\in S$ so that $ax\leq axax$ and $by\leq byby$. Now $axax\leq
axsbtx\leq axsbybtx$ that is $ax\leq axsbybtx$. Likewise $by\leq
byuaxavy $. Thus $ax\jc by$, so from given condition $ax\hc by$.
Similarly $xa\hc yb$. So  there is $c\in S$ such that $ax\leq
byc$, that is $a\leq byca= bd$, for some $d=yca\in S$. Likewise
$a\leq pb$, $b\leq qa$ for some $p,q\in S$. Thus $a\hc b$. So
$\hc=\jc$. Now $\jc=\hc=\lc\cap \rc$ gives $\jc\subseteq \lc$ and
$\jc\subseteq \rc$. Therefore $\lc=\jc=\rc$.

$(6)\Rightarrow (1)$: Let $a\in S$ Since $S$ is regular so there
exists $a'\in V_\leq (a)$. Clearly $a\lc a'a$ and $a\rc aa'$. So
by the given condition $a\rc a'a$ and $a\lc aa'$. Now $a\leq
aa'a\leq aa'aa'a\leq aa'aa'aa'a\leq aas_1a's_2aa$ for some $s_1,
s_2\in S$. So $a\leq a^2pa^2$ where $p= s_1a's_2$. So $S$ is
completely regular.

Also let $a', a''\in V_\leq (a)$. Now $a\lc a'a \lc a''a$ implies
that $a\rc a'a\rc a''a$. Also by the given condition we can show
that $a\lc aa'\lc a''a$. So it is to check that $a'\rc a''$ and
$a'\lc a''$. So $a'\hc a''$. Hence $S$ is inverse ordered
semigroup.
\end{proof}

\bibliographystyle{plain}

\begin{thebibliography}{10}
\baselineskip 5mm

\bibitem{Clifford}
A. H. Clifford and G. B. Preston, The Algebraic Theory of
semigroups (Vol II), \emph{American Mathematical Society}, \textbf
1967.



\bibitem{bh1}
A. K. Bhuniya and K. Hansda, {Complete semilattice of ordered
semigroups}, Communicated.



\bibitem{ Howie 1995}
J.M. Howie, Fundamentals of Semigroup Theory, \emph{Clarendon
Press, Oxford},  \textbf 1995.



 \bibitem{Ke2002}
N.Kehayopulu and M.Tsingelis, {On Left Regular Ordered
Semigroups}, \emph{Southeast Asian Bulletin of Mathematics}
 \textbf{25}(2002),609-615.


\bibitem{Ke2006}
N.Kehayopulu, {Ideals and Green's relations in ordered
semigroups}, \emph{International Journal of Mathematics and
Mathematical Sciences }, \textbf{}(2006), 1-8,Article ID 61286.



\bibitem{ke1}
N. Kehayopulu and   Tsingelis,  {Semilattices of Archimedean
ordered Semigroups}, \emph{Algera Colloquium} \textbf{15:3}(2008),
527-540


\bibitem{Ke2009}
 N.Kehayopulu, {Archimeadean ordered semigroups as ideal extensions},\emph{Semigroup Forum
}, \textbf{78}(2009), 343-348.


\bibitem{Saito 1971}
T.Saito, {Ordered inverse semigroups}, \emph{Trans. Amer. Math.
Soc}, \textbf{153}(1971),99-138.



\end{thebibliography}

\end{document}